\documentclass[12pt,a4paper,reqno]{amsart}
\usepackage[centertags]{amsmath}
\usepackage{amsfonts}
\usepackage{amssymb}

\usepackage{amsthm}
\usepackage{newlfont}
\usepackage{color}
\usepackage[margin=1in]{geometry} 
\usepackage{graphicx}              
\usepackage{amsmath}               
\usepackage{amsfonts}              
\usepackage{amsthm}   
\newtheorem{theorem}{Theorem}[section]
\newtheorem{proposition}[theorem]{Proposition}

\newtheorem{corollary}[theorem]{Corollary}

\theoremstyle{definition}
\newtheorem{defn}[theorem]{Definition}
\newtheorem{remark}[theorem]{Remark}
\theoremstyle{example}

\theoremstyle{problem}

\theoremstyle{property}

\relpenalty=1000 \sloppy

\begin{document}

\title[Some inequalities for operator $(p,h)$-convex functions]{Some inequalities for operator $(p,h)$-convex functions}

\author[T. H. Dinh]{Trung Hoa Dinh$^*$}
\address{Dong A University,  33 Xo Viet Nghe Tinh, Da Nang, Vietnam
}
\email{trunghoa.math@gmail.com}

\author[Khue TB Vo]{Khue Thi Bich Vo$^*$}
\address{Quy Nhon University, Vietnam \\ and University of Finance - Marketing, 2/4 Tran Xuan Soan, Ho Chi Minh City, Vietnam}
\email{votbkhue@gmail.com}

\thanks{$^*$This research is funded by Vietnam National Foundation for Science and Technology Development (NAFOSTED) under  grant number 101.04-2014.40. This work was partially finished at Ton Duc Thang University, Vietnam.}
\keywords{operator $(p,h)$-convex functions, operator Jensen type inequality, operator Hansen-Pedersen type inequality}

\begin{abstract}
Let $p$ be a positive number and $h$ a function on $\mathbb{R}^+$ satisfying $h(xy) \ge h(x) h(y)$ for any $x, y \in \mathbb{R}^+$. A non-negative  continuous function $f$ on $K (\subset \mathbb{R}^+)$ is said to be {\it operator $(p,h)$-convex} if 
\begin{equation*}\label{def}
f ([\alpha A^p + (1-\alpha)B^p]^{1/p}) \leq h(\alpha)f(A) +h(1-\alpha)f(B)
\end{equation*}
holds for all  positive semidefinite matrices $A, B$ of order $n$ with spectra in $K$, and for any $\alpha \in (0,1)$. 

In this paper, we study properties of operator $(p,h)$-convex functions and prove the Jensen, Hansen-Pedersen type inequalities for them. We also give some equivalent conditions for a function to become an operator $(p,h)$-convex. In applications, we obtain Choi-Davis-Jensen type inequality for operator $(p,h)$-convex functions and a relation between operator $(p,h)$-convex functions with operator monotone functions. 
\end{abstract}

\maketitle


\section{Introduction}
Let $\mathbb{M}_n$ be the space of $n\times n$ complex matrices, $M_n^+$ the positive part of  $\mathbb{M}_n$. Denote by $I_n$ and $O_n$ the identity and zero elements of $\mathbb{M}_n$, respectively. For self-adjoint matrices $A, B \in \mathbb{M}_n$ the notation $A \le B$ means that $B - A \in  \mathbb{M}_n^+$. The spectrum of a matrix $A \in \mathbb{M}_n$ is denoted by $\sigma(A)$. For a real-valued function $f$ of a real variable and a self-adjoint matrix $A \in \mathbb{M}_n$, the value $f(A)$ is understood by means of the functional calculus.

We assume further that $p$ is some positive number, $J$ is some interval in $\mathbb{R}^+$ such that $(0,1) \subset J$, and $K$ ($\subset \mathbb{R}^+$) is a $p$-convex subset of $\mathbb{R}^+$ (that means, $(\lambda x^p + (1-\lambda)y^p)^{1/p} \in K$ for all $x, y \in K$ and $\lambda \in [0,1]$). A function $h: J \rightarrow \mathbb{R}^+$ is called a {\it super-multiplicative} function if $h(xy) \geq h(x)h(y)$ for all $x, y \in J$. 

A non-negative function $f: K \rightarrow \mathbb{R}$ is said to be $h$-convex \cite{SV} if
for all $x,y \in K, \alpha \in (0,1)$ we have
$$
f(\alpha x+(1-\alpha)y) \le h(\alpha)f(x)+h(1-\alpha)f(y).
$$

In \cite{ZR} a more general  class of non-negative functions, the so-called $(p,h)$-convex functions is considered.

Let $h: J \rightarrow \mathbb{R}^+$ be a non-zero super-multiplicative function. A non-negative function $f: K \rightarrow  \mathbb{R}$ is said to be {\it $(p,h)$-convex} if 
\begin{equation}\label{def}
f \big([\alpha x^p + (1-\alpha)y^p]^{1/p}\big) \leq h(\alpha)f(x) +h(1-\alpha)f(y).
\end{equation}
for all $x, y \in K$ and $\alpha \in (0,1)$.

This class contains several well-known classes of functions such as non-negative convex functions, $h$- and $p$-convex functions, Godunova-Levin functions (or $Q$-class functions) and $P$-class functions that are considered by many authors. 

In \cite{Mo}-\cite{Mo4} M.S.Moslehian, M.Kian and others introduced operator $P$-class functions and operator $Q$-class functions. They studied properties and proved several inequalities for these classes of functions. 

Motivated by the above mentioned works, in this paper we define a class of operator functions which we call it operator $(p, h)$-convex. In Section 2 we study properties of operator $(p, h)$-convex functions. In Section 3 we prove some inequalities for operator $(p,h)$-convex functions such as the Jensen and Hansen-Pedersen type inequalities. Similar to the characterization of operator convex functions given by Hansen and Pedersen \cite{Hansen-Pedersen} we give some equivalent conditions for a function to become an operator $(p,h)$-convex. In applications, we obtain Choi-Davis-Jensen type inequality for operator $(p,h)$-convex functions and a relation between operator $(p,h)$-convex functions with operator monotone functions. 

\section{Class of operator $(p,h)$-convex functions}

Let us now define a new class of operator $(p, h)$-convex functions as follows.
\begin{defn}\label{defn}
Let $h: J \rightarrow \mathbb{R}^+$ be a non-zero super-multiplicative function. A non-negative continuous function $f: K \rightarrow  \mathbb{R}$ is said to be {\it operator $(p,h)$-convex} (or belongs to the class $opgx(p, h, K)$) if 
\begin{equation}\label{def}
f ([\alpha A^p + (1-\alpha)B^p]^{1/p}) \leq h(\alpha)f(A) +h(1-\alpha)f(B).
\end{equation}
for all $A, B \in \mathbb{M}_n^{+}$ with $\sigma(A), \sigma (B) \subset K$, and $\alpha \in (0,1)$.

When $p=1, h(\alpha) = \alpha$ we get the usual definition of operator convex functions on $\mathbb{R}^+$.
\end{defn}  

An operator $(p,h)$-convex function could be either an operator monotone function or an operator convex function. But there are many operator $(p,h)$-convex functions which are neither an operator monotone function nor an operator convex function. Indeed, let $p>0$, $f(t) = t^s$ and $h(\alpha) = \alpha.$ Then the function $f$ is operator $(p, h)$-convex if and only if for any positive definite matrices $A, B$ with spectra in $K$,
$$
(\alpha A^p + (1-\alpha)B^p)^{s/p} \leq h(\alpha) A^s + h(1-\alpha) B^s,
$$
or
$$
(\alpha A + (1-\alpha)B)^{s/p} \leq \alpha A^{s/p} +(1-\alpha) B^{s/p}.
$$
The last inequality means that the function $g(t) = t^{s/p}$ is operator convex, which is equivalent to the condition $s/p \in [1, 2]$. 

In the case $s \in [p, 2p] \cap [0, 1]$ (or $s \in [p, 2p] \cap [1, 2])$, the function $t^s$ is operator monotone (operator convex, respectively) and, at that time, is operator $(p,h)$-convex. If $s$ does not belong to $[p, 2p] \cap [0, 1]$, then the function $t^s$ is operator $(p,h)$-convex but neither operator monotone nor operator convex.

Recall that for arbitrary positive semidefinite matrices $A$ and $B$, the matrix function 
$$
F(p) =\left(\frac{ A^p + B^p}{2}\right)^{1/p}
$$
is called the {\it log Euclidean mean} of $A, B$. In \cite{BS} Bhagwat and Subramanian showed that the matrix function $F(p)$ is monotone with respect to $p$, on the intervals $(-\infty, -1]$ and $[1,\infty)$ but not on $(-1, 1)$.  A more general results about the monotonicity of $F(p)$ was proved by Audenaert and Hiai in \cite{AH}. Now let us prove some properties of operator $(p, h)$-convex functions.
\smallskip

\begin{proposition}
~
\begin{itemize}
\item[(i)] If $f, g \in opgx(p, h, K)$ and $\lambda >0$, then $f + g, \lambda f \in opgx(p, h, K)$;
\item[(ii)] Let $h_1$ and $h_2$ be non-negative and non-zero super-multiplicative functions defined on an interval $J$ with $h_2 \leq h_1$ in $(0,1)$. If $f \in opgx(p, h_2, K)$, then $f \in opgx(p, h_1, K)$;
\item[(iii)] Let $f \in opgx(p_2, h, K)$ and $f$ operator monotone function on $K$. If $1 \le p_1 \le p_2$, then $f \in opgx(p_1, h, K)$. 
\end{itemize}
\end{proposition}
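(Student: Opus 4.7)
The plan is to handle the three parts in increasing order of difficulty, with parts (i) and (ii) being essentially book-keeping and part (iii) relying on a matrix power-mean monotonicity theorem.

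For part (i), I would simply add the two defining inequalities for $f$ and $g$. Since $h(\alpha)$ and $h(1-\alpha)$ are non-negative scalars and the operator order is preserved by addition, the sum $f+g$ satisfies the defining inequality of $opgx(p,h,K)$. For $\lambda f$ with $\lambda>0$, multiplying the inequality for $f$ through by $\lambda$ gives the claim. For part (ii), starting from the inequality guaranteed by $f\in opgx(p,h_2,K)$, I would use that $f\ge 0$ (so $f(A),f(B)\in\mathbb{M}_n^+$) together with $h_2\le h_1$ on $(0,1)$ to conclude $h_2(\alpha)f(A)\le h_1(\alpha)f(A)$ and $h_2(1-\alpha)f(B)\le h_1(1-\alpha)f(B)$ as operator inequalities; adding and chaining yields membership in $opgx(p,h_1,K)$.

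Part (iii) is the substantive one. The key ingredient is the monotonicity of the weighted matrix power mean on $[1,\infty)$: for $1\le p_1\le p_2$, $\alpha\in(0,1)$, and positive semidefinite $A,B$ with spectra in $K$,
$$
(\alpha A^{p_1}+(1-\alpha)B^{p_1})^{1/p_1}\le (\alpha A^{p_2}+(1-\alpha)B^{p_2})^{1/p_2}.
$$
This is precisely the weighted analogue of the Bhagwat--Subramanian monotonicity of $F(p)=((A^p+B^p)/2)^{1/p}$ on $[1,\infty)$ alluded to just before the proposition, and is covered by the more general results of Audenaert and Hiai cited there. Granting this inequality, I would apply the operator monotonicity hypothesis on $f$ to both sides (both operands have spectra in $K$ by $p$-convexity), obtaining
$$
f\bigl((\alpha A^{p_1}+(1-\alpha)B^{p_1})^{1/p_1}\bigr)\le f\bigl((\alpha A^{p_2}+(1-\alpha)B^{p_2})^{1/p_2}\bigr),
$$
and then bound the right-hand side using $f\in opgx(p_2,h,K)$ by $h(\alpha)f(A)+h(1-\alpha)f(B)$. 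Chaining the two inequalities gives the $opgx(p_1,h,K)$ inequality.

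The main obstacle is verifying the weighted power-mean monotonicity in the form stated above, since the Bhagwat--Subramanian paper treats only the unweighted two-matrix mean. I would either cite the Audenaert--Hiai extension directly, or, if a self-contained argument is preferred, reduce the weighted case to the unweighted many-matrix version by approximating $\alpha$ by rationals $k/m$ and replicating $A$ and $B$ with multiplicities $k$ and $m-k$, then passing to the limit using continuity of the functional calculus. A secondary but minor subtlety is ensuring that the spectra of the intermediate matrices $(\alpha A^{p_i}+(1-\alpha)B^{p_i})^{1/p_i}$ lie in $K$ so that $f$ may be applied; this is guaranteed by the $p$-convexity hypothesis on $K$ applied spectrally.
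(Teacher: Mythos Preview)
Your proposal is correct and follows essentially the same route as the paper: parts (i) and (ii) are immediate from the definition, and part (iii) applies the operator monotonicity of $f$ to the power-mean inequality $(\alpha A^{p_1}+(1-\alpha)B^{p_1})^{1/p_1}\le(\alpha A^{p_2}+(1-\alpha)B^{p_2})^{1/p_2}$ (which the paper attributes directly to Bhagwat--Subramanian) and then invokes the $opgx(p_2,h,K)$ hypothesis. Your extra care about the weighted versus unweighted form of the power-mean monotonicity and about spectral membership in $K$ is welcome, but does not alter the structure of the argument.
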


\begin{proof}
(i) The proof immediately follows from the definition of the class $opgx(p, h, K)$.

(ii) Suppose that $f \in opgx(p, h_2, K)$. For any  positive semidefinite matrices $A, B$ with spectra in $K$ and $\alpha \in [0,1]$ we have
\begin{equation*}
\begin{split}
f \big([\alpha A^p + (1-\alpha)B^p]^{1/p}\big) & \leq h_2(\alpha)f(A) +h_2(1-\alpha)f(B) \\
& \leq h_1(\alpha)f(A) +h_1(1-\alpha)f(B).
\end{split}
\end{equation*}
Therefore, $f \in opgx(p, h_1, K)$.

(iii) Put $g(p) = ( \alpha A^p + (1 - \alpha)B^p)^{1/p}$. On account of the above mentioned result \cite{BS}, the function $g(p)$ is monotone increasing on $[1, \infty)$. Since $f$ is operator monotone on $K$ and $f \in opgx(p_1, h, K)$, hence 
\begin{equation*}
f \big( g(p_1) \big) \leq f \big( g(p_2) \big) \leq h(\alpha)f(A) + h(1 - \alpha) f(B).
\end{equation*}
Thus, $f \in opgx(p_1, h, K)$.
\end{proof}

\begin{proposition}\label{pp} Let $K$ be an interval in $\mathbb{R^+}$ such that $0 \in K$. 
\begin{itemize}
\item[(i)] If $f \in opgx(p, h, K), f(0) = 0$, and $h$ is super-multiplicative, then 
\begin{equation} \label{a}
f \big([\alpha A^p + \beta B^p]^{1/p}\big) \leq h(\alpha)f(A) +h(\beta)f(B)
\end{equation}
holds for arbitrary positive semidefinite matrices $A, B$ with spectra in $K$ and all $\alpha, \beta > 0$ such that $ \alpha + \beta \leq 1$.
\item[(ii)] Let $h$ be a non-negative function such that $h(\alpha) < \frac{1}{2}$ for some $\alpha \in (0, \frac{1}{2})$. If $f$ is a non-negative continuous function satisfying (\ref{a}) for all positive definite matrices $A, B$ with spectra in $K$ and all $ \alpha, \beta > 0$ with $\alpha + \beta \leq 1$, then $f(0) = 0$.
\end{itemize}
\end{proposition}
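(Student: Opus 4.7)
For part (i), my plan is to reduce to the defining convex-combination case by inserting the zero matrix as a dummy term. Setting $\mu = \alpha + \beta \in (0,1]$, if $\mu = 1$ the inequality is exactly the definition of operator $(p,h)$-convexity, so I would assume $\mu < 1$. Let $C$ denote the unique positive semidefinite matrix with $C^p = (\alpha/\mu) A^p + (\beta/\mu) B^p$; by the $p$-convexity of $K$, its spectrum lies in $K$. The key identity
\[
\alpha A^p + \beta B^p \;=\; \mu C^p + (1 - \mu)\, O_n^p
\]
then lets me apply the definition to the pair $(C, O_n)$ with weight $\mu$; since $f(O_n) = f(0) I_n = O_n$, the right-hand side collapses to $h(\mu) f(C)$. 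A second application of the definition to $(A,B)$ with weight $\alpha/\mu$ yields $f(C) \le h(\alpha/\mu) f(A) + h(\beta/\mu) f(B)$. Combining these and invoking super-multiplicativity, $h(\mu) h(\alpha/\mu) \le h(\mu \cdot \alpha/\mu) = h(\alpha)$ and similarly for $\beta$, produces the claimed bound (using $f(A), f(B) \ge 0$ to pass the scalar comparison to the operator level).

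For part (ii), my plan is to specialize (\ref{a}) to scalar multiples of the identity so that it becomes a contractive scalar inequality amenable to iteration. Fix any $t > 0$ with $t \in K$, take $A = B = t I_n$, and pair the given $\alpha \in (0, 1/2)$ with $\beta = \alpha$; then $[\alpha A^p + \beta B^p]^{1/p} = (2\alpha)^{1/p} t \, I_n$ and (\ref{a}) collapses to
\[
f\bigl((2\alpha)^{1/p} t\bigr) \;\le\; 2 h(\alpha)\, f(t).
\]
With $c := (2\alpha)^{1/p} \in (0,1)$ and $k := 2 h(\alpha) \in [0,1)$, iterating yields $f(c^n t) \le k^n f(t)$. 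Since $c^n t$ lies in $K$ throughout (as $K$ is an interval containing $0$ and $t$) and both $c^n t \to 0$ and $k^n f(t) \to 0$, continuity of $f$ at $0$ forces $f(0) = 0$.

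I expect the main technical point to lie in the rescaling step of (i): the super-multiplicativity estimate $h(\mu) h(\alpha/\mu) \le h(\alpha)$ is exactly the missing link that converts the two nested applications of the definition into the single desired inequality, and it relies on $f(A), f(B)$ being positive semidefinite so that multiplying by the scalar $h(\mu)$ preserves the L\"owner order. The zero-matrix trick itself succeeds only because the hypotheses $0 \in K$ and $f(0) = 0$ are assumed simultaneously; without the latter, $f(O_n)$ would spoil the collapse. Part (ii) is conceptually easier once one notices that the sharp threshold $h(\alpha) < 1/2$ is precisely what is needed to make $k < 1$ and run the geometric iteration to the boundary point $0$.
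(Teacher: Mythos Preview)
Your argument for part (i) is correct and is essentially the paper's proof with the two applications of the defining inequality performed in the opposite order: the paper first separates $A$ and $B$ with weights $a=\alpha/\gamma$, $b=\beta/\gamma$ and then inserts $O_n$ to strip the factor $\gamma$ from each term, whereas you first insert $O_n$ to strip $\mu$ from the combined expression and then separate $A$ and $B$. The ingredients (the zero-matrix trick enabled by $f(0)=0$ and $0\in K$, followed by super-multiplicativity $h(\mu)h(\alpha/\mu)\le h(\alpha)$) are identical.

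Your part (ii) is correct but genuinely different from the paper's. The paper argues by contradiction: assuming $f(0)>0$, it substitutes $A=B=O_n$ and $\beta=\alpha$ directly into (\ref{a}) to obtain $f(0)\le 2h(\alpha)f(0)$, hence $2h(\alpha)\ge 1$, contradicting the hypothesis. Your route instead plugs in $A=B=tI_n$ with $t>0$, extracts the scalar contraction $f(ct)\le k f(t)$ with $c=(2\alpha)^{1/p}<1$ and $k=2h(\alpha)<1$, and iterates down to $0$ via continuity. The paper's argument is a one-line contradiction but, as written, uses $O_n$, which is not positive definite; strictly speaking it needs a limiting step $A=B=\varepsilon I_n$, $\varepsilon\to 0$, to match the stated hypothesis. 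Your iteration avoids this issue by working entirely with positive definite matrices, at the modest cost of an extra paragraph.
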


\begin{proof} (i) Let $\alpha, \beta > 0, \alpha + \beta = \gamma < 1$, and let $a$ and $b$ be numbers such that $a = \dfrac{\alpha}{\gamma}$ and $b = \dfrac{\beta}{\gamma}$. Then we have $a + b = 1$ and
\begin{equation*}
\begin{split}
f \big([\alpha A^p + \beta B^p]^{1/p}\big) 
& = f \big([a \gamma A^p + b \gamma B^p]^{1/p}\big) \\
& \leq  h(a)f((\gamma A^p)^{1/p}) +h(b)f((\gamma B^p)^{1/p}) \\
& = h(a)f ((\gamma A^p + (1 - \gamma) O_n^p)^{1/p}) +h(b)f(( \gamma B^p + (1 - \gamma) O_n^p)^{1/p}) \\
& \leq h(a) h(\gamma)f(A) + h(b)h(\gamma)f(B) \\
& \leq h(a \gamma)f(A) + h(b \gamma)f(B) \\
& = h(\alpha)f(A) + h(\beta)f(A)
\end{split}
\end{equation*}

(ii) Suppose that $f(0) > 0$, then we have $f(O_n)=f(0) I_n$. Substitute $ A = B = O_n$ into (\ref{a}), we get
\begin{equation} \label{0}
f(0) I_n= f (( \alpha O_n^p + \beta O_n^p )^{1/p} ) \leq h(\alpha)f(0)I_n + h(\beta) f(0)I_n.
\end{equation}
Let $\alpha = \beta$. Dividing both sides of (\ref{0}) by $f(0)$, we arrive to a contradiction:
 $$2h(\alpha) \geq 1 \quad \hbox{for all} \quad \alpha \in (0, \frac{1}{2}).$$
Thus, $f(0) = 0$.
\end{proof}

\begin{corollary}
Let $h_s(x) = x^s$, where $s, x > 0$, and let     $0 \in K$.  For all $f \in opgx(p, h_s, K)$, the inequality (\ref{a}) holds for all $\alpha, \beta > 0$ with $\alpha + \beta \leq 1$ if and only if $f(0) = 0$. 
\end{corollary}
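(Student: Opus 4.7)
The plan is to recognize this corollary as a direct specialization of Proposition~\ref{pp} to the super-multiplicative function $h_s(x)=x^s$, so the work reduces to verifying that $h_s$ meets each hypothesis.

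First I would note that $h_s$ is actually multiplicative: $h_s(xy)=(xy)^s=x^s y^s = h_s(x)h_s(y)$, so in particular $h_s$ is super-multiplicative and fits the running assumption on $h$.

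For the $(\Leftarrow)$ direction, suppose $f\in opgx(p,h_s,K)$ with $f(0)=0$. Then Proposition~\ref{pp}(i) applies verbatim, since its only hypotheses on $h$ are non-negativity and super-multiplicativity, both of which $h_s$ satisfies. Hence the inequality~(\ref{a}) holds for all $\alpha,\beta>0$ with $\alpha+\beta\le 1$.

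For the $(\Rightarrow)$ direction, assume $f\in opgx(p,h_s,K)$ and that~(\ref{a}) holds for all such $\alpha,\beta$. I would apply Proposition~\ref{pp}(ii), whose only additional requirement is that $h_s(\alpha)<\tfrac{1}{2}$ for \emph{some} $\alpha\in(0,\tfrac{1}{2})$. Since $s>0$, we have $h_s(\alpha)=\alpha^s \to 0$ as $\alpha\to 0^+$, so picking $\alpha$ sufficiently small (for instance any $\alpha<(1/2)^{1/s}$ intersected with $(0,1/2)$) produces $\alpha^s<\tfrac{1}{2}$. The conclusion of Proposition~\ref{pp}(ii) then gives $f(0)=0$.

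There is essentially no obstacle here: the whole corollary is a packaging of Proposition~\ref{pp}. The only small point worth writing out carefully is the verification $\alpha^s<\tfrac{1}{2}$ in the $(\Rightarrow)$ direction, so that the hypothesis of Proposition~\ref{pp}(ii) is legitimately invoked rather than asserted.
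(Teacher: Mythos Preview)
Your proposal is correct and is essentially the paper's approach, just streamlined: for $(\Leftarrow)$ the paper rewrites the computation of Proposition~\ref{pp}(i) with $h_s$ plugged in rather than citing it, and for $(\Rightarrow)$ the paper substitutes $A=B=O_n$, $\alpha=\beta=1/k$ and lets $k\to\infty$ to force $f(0)\le 0$, which is a minor variant of the contradiction argument inside Proposition~\ref{pp}(ii) that you invoke. Your check that $h_s(\alpha)=\alpha^s<\tfrac12$ for small $\alpha$ is exactly the missing verification needed to make that citation legitimate, so nothing is lacking.
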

\begin{proof}
By Proposition \ref{pp}, we just need to consider the case $\alpha, \beta > 0$ with $\alpha + \beta \leq 1$. 

Put $\alpha + \beta = \gamma \leq 1$, and let $a$ and $b$ be positive numbers such that $a = \dfrac{\alpha}{\gamma}$ and $b = \dfrac{\beta}{\gamma}$. Then, $a + b = 1$ and
\begin{equation*}
\begin{split}
f([\alpha A^p + \beta B^p]^{1/p}) & = f([a \gamma A^p + b \gamma B^p]^{1/p}) \\
& \leq h(a) f([\gamma A^p]^{1/p}) + h(b) f([\gamma B^p]^{1/p}) \\
& = a^s f([\gamma A^p]^{1/p}) + b^s f ( [\gamma B^p]^{1/p}) \\
& \leq a^s \gamma^s f(A) + a^s (1 - \gamma)^s f(O_n)  + b^s \gamma^s f(B) + b^s (1 - \gamma)^s f(O_n)  \\
& = a^s \gamma^s  f(A) + b^s \gamma^s  f(B)\\
& = \alpha^s f(A) + \beta^s f(B).
\end{split}
\end{equation*}
Substitute $A = B = O_n, \alpha = \beta = 1/k$ ($k \in \mathbb{N}, k \geq 2$) into (\ref{a}), and then, tend $k$ to the infinite, we get $f(0) \leq 0$. Since $f(0) \geq 0$ by the definition of operator $(p,h)$-convex functions, hence $f(0) = 0$.
\end{proof}

\section{Inequalities for operator $(p,h)$-convex functions}
\subsection{Jensen type inequality for operator $(p,h)$-convex functions}

The classical Jensen inequality for $n$ real numbers states that for any positive tuples $a_i$ and $t_i$ such that $\sum_{i=1}^{n} t_i = 1$
\begin{equation*}
f(\sum_{i=1}^{n}t_ia_i) \leq \sum_{i=1}^n t_if(a_i).
\end{equation*}
The matrix version of Jensen type inequality for operator $(p,h)$-convex functions is as follows.
\begin{theorem} Let $h$ be a non-negative super-multiplicative function on $J$ and $f \in opgx(p, h, K)$. Then for any $k$  positive semidefinite matrices $A_i$ with spectra in $K$ and any $\alpha_i \in (0,1)$ satisfying $\sum_{i = 1}^k {\alpha_i } = 1$, 
\begin{equation} \label{TQ}
f \big( [\sum_{i=1}^k {\alpha_i A_i^p ]^{1/p}} \big) \leq \sum_{i=1}^k {h(\alpha_i) f(A_i)}.
\end{equation}
\end{theorem}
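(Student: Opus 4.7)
The plan is to prove the inequality by induction on $k$. The base case $k=2$ is precisely the defining inequality (\ref{def}) for operator $(p,h)$-convex functions, so nothing is required there.

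For the inductive step, assume the inequality holds for $k$ matrices. Given $k+1$ positive semidefinite matrices $A_1,\dots,A_{k+1}$ with spectra in $K$ and weights $\alpha_1,\dots,\alpha_{k+1}\in(0,1)$ summing to $1$, set $\gamma=1-\alpha_{k+1}\in(0,1)$ and define $\beta_i=\alpha_i/\gamma$ for $i=1,\dots,k$, so that $\sum_{i=1}^k\beta_i=1$. Then I would introduce the auxiliary matrix
\begin{equation*}
C=\Bigl[\sum_{i=1}^{k}\beta_i A_i^p\Bigr]^{1/p},
\end{equation*}
and rewrite the inner sum as $\sum_{i=1}^{k+1}\alpha_i A_i^p=\gamma C^p+\alpha_{k+1}A_{k+1}^p$. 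Applying definition (\ref{def}) to the pair $(C,A_{k+1})$ with parameter $\alpha_{k+1}$ gives
\begin{equation*}
f\Bigl(\bigl[\sum_{i=1}^{k+1}\alpha_i A_i^p\bigr]^{1/p}\Bigr)\le h(\gamma)f(C)+h(\alpha_{k+1})f(A_{k+1}),
\end{equation*}
and the inductive hypothesis then bounds $f(C)\le\sum_{i=1}^{k}h(\beta_i)f(A_i)$. Combining and invoking super-multiplicativity in the form $h(\gamma)h(\beta_i)\le h(\gamma\beta_i)=h(\alpha_i)$ completes the induction.

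The only subtle point — and the step I expect to be the main obstacle — is verifying that the auxiliary matrix $C$ actually has spectrum in $K$, which is needed before one can apply (\ref{def}) with $C$ as an argument. This is where the $p$-convexity of $K$ (stated in the preliminaries) enters: since each $A_i$ has spectrum in $K$, iterating the scalar property $(\lambda x^p+(1-\lambda)y^p)^{1/p}\in K$ on the endpoints of the enclosing interval gives $\sigma(C)\subset K$. A clean way to state this is to note that if $K\subset[a,b]$ with $a,b\in K$, then $a^pI\le\sum_i\beta_iA_i^p\le b^pI$, and so $\sigma(C)\subset[a,b]\cap K$ by $p$-convexity. Once this spectral containment is in hand, the induction goes through without further issue.
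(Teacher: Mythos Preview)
Your proof is correct and follows essentially the same inductive argument as the paper: split off the last summand, apply the two-term defining inequality, invoke the inductive hypothesis on the remaining $p$-mean, and finish with super-multiplicativity of $h$. You are in fact more careful than the paper in one respect---you explicitly justify that the auxiliary matrix $C$ has spectrum in $K$ (using that a $p$-convex subset of $\mathbb{R}^+$ is an interval), a point the paper silently assumes.
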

\begin{proof}
We will prove the theorem by the mathematical induction.

When $k = 2$, inequality (\ref{TQ}) reduces to (\ref{def}). 

Assume that (\ref{TQ}) holds for  any $(k -1)$  positive semidefinite matrices with spectra in $K$. We need to prove (\ref{TQ}) for any $k$  positive semidefinite matrices with spectra in $K$. We have
\begin{equation*}
\begin{split}
f( [\sum_{i=1}^k {\alpha_i A_i^p ]^{1/p}}) & = f( [\sum_{i=1}^{k-1} {\alpha_i A_i^p + \alpha_n A_n^p]^{1/p}} \big) \\
& =  f ( [(1- \alpha_n)(\sum_{i=1}^{k-1} {\frac{\alpha_i}{1- \alpha_n} A_i^p) + \alpha_n A_n^p]^{1/p}})\\
& \leq h(1- \alpha_n) f ( [\sum_{i=1}^{k-1} {\frac{\alpha_i }{1- \alpha_n}A_i^p ]^{1/p}}) + h(\alpha_n) f(A_n)\\
& \leq h(1- \alpha_n)\sum_{i=1}^{k-1}{ h(\frac{\alpha_i }{1- \alpha_n})f(A_i)} + h(\alpha_n) f(A_n) \\
& \leq \sum_{i=1}^k {h(\alpha_i) f(A_i)}.
\end{split}
\end{equation*}
The first and the second inequalities follow from the inductive assumption, the third one follows from the super-multiplication of the function $h$.

Thus, (\ref{TQ}) holds for any $k$.
\end{proof}

\begin{remark}
For $h(\alpha) = \alpha$ and $p = 1$, the inequality (\ref{TQ}) reduces to the well-known Jensen inequality for operator convex functions:
\begin{equation*}
f(\sum_{i=1}^k{\alpha_iA_i}) \leq \sum_{i=1}^k{\alpha_if(A_i)}
\end{equation*}
for $\alpha_i \in (0,1)$ and $\sum_{i=1}^k{\alpha_i}=1$.

For $h(\alpha) = \frac{1}{\alpha}$ and $p=1$ we get the Jensen inequality for operator $Q$-class functions:
\begin{equation*}
f(\sum_{i=1}^k{\alpha_iA_i}) \leq \sum_{i=1}^k \frac{f(A_i)}{\alpha_i}
\end{equation*}
for $\alpha_i \in (0,1)$ and $\sum_{i=1}^k{\alpha_i}=1$.

For $h(\alpha) = 1$, $p=1$ we get the Jensen inequality for operator $P$-class functions:
\begin{equation*}
f(\sum_{i=1}^k{\alpha_iA_i}) \leq \sum_{i=1}^k f(A_i)
\end{equation*}
for $\alpha_i \in (0,1)$ and $\sum_{i=1}^k{\alpha_i}=1$.
\end{remark}
\bigskip

As an application of Jensen type inequality we obtain an inequality for index set function. Let $E$ be a finite nonempty set of positive integers and let $F$ be an index set function defined by
\begin{equation}
F(E) = h(W_E)f ([ \frac{1}{W_E} \sum_{i \in E}{w_iA_i^p}]^{1/p} ) - \sum_{i \in E}{h(w_i)f(A_i)},
\end{equation}
where
$
 W_E = \sum_{i \in E}{w_i}.
$
\begin{theorem} \label{ME}
Let $h: \mathbb{R}^+ \rightarrow \mathbb{R}^+$ be a super-multiplicative function, $M$ and $E$ finite nonempty sets of positive integers such that $M \cap E =\emptyset$. Then for any operator $(p,h)$-convex function $f: K \rightarrow \mathbb{R}^+$, for any $w_i > 0$, and for any  positive semidefinite matrices $A_i\ (i \in M \cup E)$ with spectra in $K$, 
\begin{equation} \label{TME}
F(M \cup E) \leq F(M) + F(E).
\end{equation}

\begin{proof}
On account of the operator $(p,h)$-convexity of $f$ and the super-multiplication of $h$, we get 
\begin{equation}\label{set}
\begin{split}
& h(W_{M \cup E}) f ([ \frac{1}{W_{M \cup E}}  \sum_{i \in M \cup E}{w_iA_i^p} ]^{1/p} )\\
 & = h(W_{M \cup E}) f ([ \frac{W_M}{W_{M \cup E}} \sum_{i \in M}{\frac{w_i}{W_M}A_i^p} + \dfrac{W_E}{W_{M \cup E}} \sum_{i \in E}{\frac{w_i}{W_E}A_i^p} ]^{1/p}) \\
& \leq h(W_{M \cup E}) h( \frac{W_M}{W_{M \cup E}}) f([ \sum_{i \in M}{\frac{w_i}{W_M}A_i^p}]^{1/p}) + h(W_{M \cup E}) h( \frac{W_E}{W_{M \cup E}}) f ([ \sum_{i \in E}{\frac{w_i}{W_E}A_i^p}]^{1/p}) \\
& \le h(W_M) f ([\frac{1}{W_M} \sum_{i \in M}{w_iA_i^p}]^{1/p}) + h(W_E) f([\frac{1}{W_E} \sum_{i \in E}{w_i A_i^p}]^{1/p}).\\
\end{split}
\end{equation}
Subtracting from both sides of (\ref{set}) by $\sum_{i \in M \cup E}{h(w_i) f(A_i)}$ and using the identity $\sum_{i \in M \cup E}{h(w_i)f(A_i)} = \sum_{i \in M}{h(w_i)f(A_i)}+\sum_{i \in  E}{h(w_i)f(A_i)}$, we obtain (\ref{ME}).
\end{proof}
\end{theorem}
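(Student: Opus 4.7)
The plan is to reduce the claimed inequality to a single application of the defining operator $(p,h)$-convexity together with the super-multiplicative property of $h$. Since $M\cap E=\emptyset$, the sum $\sum_{i\in M\cup E} h(w_i)f(A_i)$ splits cleanly as $\sum_{i\in M} h(w_i)f(A_i)+\sum_{i\in E} h(w_i)f(A_i)$. After subtracting these from the definition of $F$, the inequality $F(M\cup E)\le F(M)+F(E)$ is equivalent to
\begin{equation*}
h(W_{M\cup E})\, f\!\left(\left[\tfrac{1}{W_{M\cup E}}\sum_{i\in M\cup E} w_i A_i^p\right]^{1/p}\right) \le h(W_M)\, f(X) + h(W_E)\, f(Y),
\end{equation*}
where I set $X:=\left[\tfrac{1}{W_M}\sum_{i\in M} w_i A_i^p\right]^{1/p}$ and $Y:=\left[\tfrac{1}{W_E}\sum_{i\in E} w_i A_i^p\right]^{1/p}$.

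To prove this, first I would use the algebraic identity
\begin{equation*}
\tfrac{1}{W_{M\cup E}}\sum_{i\in M\cup E} w_i A_i^p \;=\; \tfrac{W_M}{W_{M\cup E}}\, X^p + \tfrac{W_E}{W_{M\cup E}}\, Y^p,
\end{equation*}
noting that the two coefficients sum to $1$ because $M$ and $E$ are disjoint. Both $X$ and $Y$ are positive semidefinite with spectra in $K$ since $K$ is $p$-convex. Applying the operator $(p,h)$-convexity of $f$ with $\alpha=W_M/W_{M\cup E}$ (and $1-\alpha = W_E/W_{M\cup E}$) yields an upper bound of $h(\alpha)f(X)+h(1-\alpha)f(Y)$ for the left-hand side of the display above, divided by $h(W_{M\cup E})$.

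Next, multiplying through by $h(W_{M\cup E})\ge 0$ and invoking super-multiplicativity in the form $h(W_{M\cup E})\,h\bigl(W_M/W_{M\cup E}\bigr)\le h(W_M)$, and likewise for $E$, delivers exactly the reduced inequality. Reassembling with the split sum of the $h(w_i)f(A_i)$ terms then gives $F(M\cup E)\le F(M)+F(E)$.

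I do not anticipate a serious obstacle: the whole argument is essentially a convex-combination rewrite followed by one application of each structural hypothesis. The only subtlety is orienting the super-multiplicative inequality correctly — we need an \emph{upper} bound on the product $h(W_{M\cup E})\,h(W_M/W_{M\cup E})$, and the hypothesis $h(xy)\ge h(x)h(y)$ supplies precisely that with the choice $xy = W_M$. It is easy to flip this the wrong way, so that is the step I would write out most carefully.
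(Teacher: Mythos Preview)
Your proposal is correct and follows essentially the same route as the paper: rewrite the $(M\cup E)$-average as a two-term convex combination with coefficients $W_M/W_{M\cup E}$ and $W_E/W_{M\cup E}$, apply operator $(p,h)$-convexity once, then use super-multiplicativity in the form $h(W_{M\cup E})\,h(W_M/W_{M\cup E})\le h(W_M)$ and subtract the split sum. Your explicit remark that $X,Y$ have spectra in $K$ by $p$-convexity and your care about the orientation of the super-multiplicative inequality are points the paper leaves implicit.
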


From Theorem \ref{ME} we get a simple corollary as follow.
\begin{corollary}
Let $h: (0, \infty) \rightarrow \mathbb{R}$ be a non-negative super-multiplicative function. If $w_i > 0\ (i = 1,\cdots, k)$, and $M_l = \{1,\cdots, L\}$, then for $f \in opgx(p, h, K)$, we have
\begin{equation*} \label{coME1}
F(M_k) \leq F(M_{k-1}) \leq ... \leq F(M_2) \leq 0
\end{equation*}
and
\begin{equation*} \label{coME2}
F(M_k) \leq \min_{1 \leq i \leq j \leq k}{\bigg \lbrace h(w_i+w_j)f \bigg( \bigg[\dfrac{w_iA_i^p + w_jA_j^p}{w_i + w_j} \bigg]^{\frac{1}{p}} \bigg) - h(w_i)f(A_i) - h(w_j)f(A_j) \bigg \rbrace}.
\end{equation*}
\end{corollary}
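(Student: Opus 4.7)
The strategy is to deduce both statements from Theorem \ref{ME} together with the key observation that $F$ vanishes on singletons: substituting $E = \{l\}$ into the definition of $F$ gives $W_{\{l\}} = w_l$, the weighted matrix mean $\bigl[\tfrac{1}{w_l}(w_l A_l^p)\bigr]^{1/p}$ collapses to $A_l$, and therefore $F(\{l\}) = h(w_l)f(A_l) - h(w_l)f(A_l) = 0$. This lets me absorb any singleton term into a subadditive splitting at no cost.

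For the descending chain, I would induct on $l$. Writing $M_l = M_{l-1} \cup \{l\}$ as a disjoint union and applying Theorem \ref{ME} gives $F(M_l) \le F(M_{l-1}) + F(\{l\}) = F(M_{l-1})$, which collapses the entire ladder $F(M_k) \le F(M_{k-1}) \le \cdots \le F(M_2)$. The terminal step $F(M_2) \le 0$ is one more instance of Theorem \ref{ME}: $F(M_2) = F(\{1\} \cup \{2\}) \le F(\{1\}) + F(\{2\}) = 0$.

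For the minimum bound, fix a pair $1 \le i < j \le k$ and split $M_k = \{i,j\} \cup (M_k \setminus \{i,j\})$ disjointly. Theorem \ref{ME} then yields $F(M_k) \le F(\{i,j\}) + F(M_k \setminus \{i,j\})$, and the first summand is exactly the quantity inside the braces of the claimed bound. The second summand is controlled by what is already proved: when $|M_k \setminus \{i,j\}| \ge 2$ it is $\le 0$ by the descending chain, when the complement has a single element it vanishes by the singleton identity, and when $k = 2$ the complement is empty and the inequality is trivial (with the convention $F(\emptyset) = 0$). Minimising over admissible $i,j$ completes the argument. The main, and essentially only, obstacle is organising these boundary cases cleanly; once $F(\{l\}) = 0$ is in hand, the corollary is purely a matter of choosing the right disjoint decomposition of $M_k$ and invoking Theorem \ref{ME}.
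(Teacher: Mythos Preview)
Your argument is correct and is exactly the computation the paper leaves to the reader (no proof is given there); the key observation $F(\{l\})=0$ together with repeated use of Theorem~\ref{ME} is the intended route. One small wrinkle: when you invoke ``the descending chain'' to conclude $F(M_k\setminus\{i,j\})\le 0$, note that $M_k\setminus\{i,j\}$ is not literally one of the sets $M_l=\{1,\dots,l\}$, so strictly speaking the chain you proved does not apply directly---but the same peel-off-a-singleton argument (or a harmless relabeling of indices) gives $F(S)\le 0$ for every $S$ with $|S|\ge 2$, which is what you need.
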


\subsection{Hansen-Pedersen type inequality}
The proof of the following theorem is adapted from the proof of Hansen-Pedersen inequality for operator convex functions \cite{Hansen-Pedersen}.
\begin{theorem}\label{HP}
Let $h: J \rightarrow \mathbb{R}^+$ be a super-multiplicative function, $f: K \rightarrow \mathbb{R}^+$ an operator $(p,h)$-convex function. Then for any pair of positive semidefinite matrices $A$ and $B$ with spectra in $K$ and for matrices $C, D$ such that $CC^* + DD^* =I_n$, 
\begin{equation}
f( [CA^pC^* + DB^pD^*]^{1/p}) \leq 2h(\frac{1}{2}) ( Cf(A)C^* + Df(B)D^*).
\end{equation}
\end{theorem}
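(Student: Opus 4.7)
The plan is to adapt the classical Hansen--Pedersen block-matrix dilation argument, with the factor $1/2$ in the usual operator-convex version replaced by $h(1/2)$. The condition $CC^*+DD^*=I_n$ says that the row block $(C\ D)\colon\mathbb{C}^{2n}\to\mathbb{C}^n$ is a co-isometry, and hence extends to a $2n\times 2n$ unitary. Put
$$U=\begin{pmatrix}C & D\\ E & F\end{pmatrix},\qquad V=\begin{pmatrix}C & D\\ -E & -F\end{pmatrix},$$
where $E,F\in\mathbb{M}_n$ are chosen so that $UU^*=I_{2n}$. The orthogonality relations forced by $UU^*=I_{2n}$ (namely $CE^*+DF^*=0$ and $EE^*+FF^*=I_n$) are precisely what is needed to verify that the sign-flipped companion $V$ is also unitary.

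Set $X=\operatorname{diag}(A,B)\in\mathbb{M}_{2n}^+$, so $\sigma(X)=\sigma(A)\cup\sigma(B)\subset K$. Both $UXU^*$ and $VXV^*$ are then positive semidefinite with spectra in $K$. A direct block multiplication, exploiting only the sign flip in the second block-row of $V$, yields
$$\tfrac12\bigl(UX^pU^*+VX^pV^*\bigr)=\begin{pmatrix}CA^pC^*+DB^pD^* & 0\\ 0 & EA^pE^*+FB^pF^*\end{pmatrix},$$
i.e.\ the off-diagonal blocks cancel and the quantity of interest sits in the top-left corner. Since $K$ is $p$-convex, the $p$-th root of this block-diagonal matrix has spectrum in $K$, so $f$ may legitimately be applied to it, and the result is block diagonal with $(1,1)$-block $f\bigl([CA^pC^*+DB^pD^*]^{1/p}\bigr)$.

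Apply now the defining inequality of Definition \ref{defn} to the pair $UXU^*,\,VXV^*$ with $\alpha=1/2$, using the standard identities $(WXW^*)^p=WX^pW^*$ and $f(WXW^*)=Wf(X)W^*$ valid for any unitary $W$:
$$f\!\left(\bigl[\tfrac12(UXU^*)^p+\tfrac12(VXV^*)^p\bigr]^{1/p}\right)\le h(\tfrac12)\bigl(Uf(X)U^*+Vf(X)V^*\bigr).$$
The very same sign cancellation that produced the block-diagonal structure on the left forces the $(1,1)$-block of $Uf(X)U^*+Vf(X)V^*$ to equal $2\bigl(Cf(A)C^*+Df(B)D^*\bigr)$. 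Reading off the $(1,1)$-block of the resulting operator inequality between block-diagonal matrices gives exactly the claimed bound. I expect the only delicate point to be the unitary dilation of the co-isometry $(C\ D)$ together with the consistency check that $V$ remains unitary; the remainder is routine block-matrix arithmetic and functional calculus.
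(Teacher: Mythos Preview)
Your proposal is correct and is essentially the same argument as the paper's. The only cosmetic difference is that the paper conjugates $U\operatorname{diag}(A^p,B^p)U^*$ by the sign matrix $W=\operatorname{diag}(-I_n,I_n)$, whereas you absorb this sign flip directly into a second unitary $V=\begin{pmatrix}C & D\\ -E & -F\end{pmatrix}=(-W)U$; since $VX^pV^*=WUX^pU^*W$, the two computations coincide and the rest of the argument (applying Definition~\ref{defn} at $\alpha=1/2$, unitary invariance of the functional calculus, and reading off the $(1,1)$-block) is identical.
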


\begin{proof} From the condition $CC^* + DD^* = I_n$, it implies that we can find a unitary block matrix 
$$
U: =
  \begin{bmatrix}
 C & D\\
   X & Y\\
  \end{bmatrix}
$$
when the entries $X$ and $Y$ are chosen properly. Then
$$
U \begin{bmatrix}
 A^p & O_n\\
   O_n & B^p\\
  \end{bmatrix}U^* =  \begin{bmatrix}
 CA^pC^* + DB^pD^* & CA^pX^* + DB^pY^* \\
   XA^pC^* + YB^pD^*  &XA^pX^* + YB^pY^* \\
  \end{bmatrix}
$$
It's easy to check that
$$
\dfrac{1}{2}V\begin{bmatrix}
 A_{11} & A_{12}\\
  A_{21} & A_{22}\\
  \end{bmatrix}V + \dfrac{1}{2}\begin{bmatrix}
 A_{11} & A_{12}\\
  A_{21} & A_{22}\\
  \end{bmatrix} = \begin{bmatrix}
 A_{11} & O_n\\
  O_n & A_{22}\\ 
  \end{bmatrix}
$$
for $ V = \begin{bmatrix}
 -I & O_n\\
  O_n & I\\
  \end{bmatrix}.$
It follows that the matrix
$$
Z: = \dfrac{1}{2}VU \begin{bmatrix}
 A^p & O_n\\
  O_n & B^p\\ 
  \end{bmatrix}U^*V + \dfrac{1}{2}U \begin{bmatrix}
 A^p & O_n\\
  O_n & B^p\\ 
  \end{bmatrix}U^* 
$$
is diagonal, where $ \begin{bmatrix}
 A_{11} &A_{12}\\
A_{21}& A_{22}\\ 
  \end{bmatrix} = U\begin{bmatrix}
 A^p & O_n\\
  O_n & B^p\\ 
  \end{bmatrix}U^*.$
It implies  $ Z_{11} = CA^pC^* + DB^pD^*$ and $f(Z_{11}^{1/p}) = f ((CA^pC^* + DB^pD^*)^{1/p})$.
On account of the operator $(p,h)$-convexity of $f$, we have
 \begin{equation*}
  \begin{split}
  f(Z^{1/p}) & = f \bigg(\left(\dfrac{1}{2}VU \begin{bmatrix}
 A^p & O_n\\
  O_n & B^p\\ 
  \end{bmatrix}U^*V + \dfrac{1}{2}U \begin{bmatrix}
 A^p & O_n\\
  O_n & B^p\\ 
  \end{bmatrix}U^*\right)^{1/p} \bigg) \\
  & \leq h(\dfrac{1}{2}) VU f \bigg( \begin{bmatrix}
 A^p & O_n\\
  O_n & B^p\\ 
  \end{bmatrix}^{1/p} \bigg)U^*V + h(\dfrac{1}{2}) U f \bigg( \begin{bmatrix}
 A^p & O_n\\
  O_n & B^p\\ 
  \end{bmatrix}^{1/p} \bigg)U^*  \\
  & = 2h(\dfrac{1}{2}) \bigg( \dfrac{1}{2}VUf \left(\begin{bmatrix}
 A & O_n\\
  O_n & B\\ 
  \end{bmatrix} \right)U^*V + \frac{1}{2}Uf \left(\begin{bmatrix}
 A & O_n\\
  O_n & B\\ 
  \end{bmatrix} \right)U^* \bigg) \\
  & = 2h(\dfrac{1}{2}) \begin{bmatrix}
 Cf(A)C^* + D f(B)D^* & O_n\\
  O_n & Xf(A)X^* + Yf(B)Y^*\\ 
  \end{bmatrix},
    \end{split}
 \end{equation*}
  where 
 $$
 \dfrac{1}{2}VUU^*V + \dfrac{1}{2}UU^* = I_n.
 $$
 Therefore, 
\begin{equation*}
\begin{split}
 f(Z_{11}^{1/p})& = f ( [CA^pC^* + DB^pD^*]^{1/p})\\
 & \leq 2 h(\frac{1}{2})[Cf(A)C^* + Df(B)D^*].
 \end{split}
\end{equation*}
 \end{proof}
    
In the following theorem, we obtain several equivalent conditions for a function to become operator $(p,h)$-convex.
\begin{theorem}\label{apl}
Let $f$ be a non-negative  continuous function on the interval $K$  such that $f(0)=0$, and $h$ a non-negative and non-zero super-multiplicative function on $J$ satisfying $2h(1/2) \le \alpha^{-1} h(\alpha)\ (\alpha \in (0,1))$. Then the following statements are equivalent: 
\begin{itemize}
\item[(i)] $f$ is an operator $(p,h)$-convex function;

\item[(ii)] for any contraction $||V|| \leq 1$ and self-adjoint matrix $A$ with spectrum in $K$,
$$f ((V^*A^pV)^{1/p}) \leq 2h(\frac{1}{2})V^*f(A)V;$$

\item[(iii)] for any orthogonal projection $Q$ and any  positive semidefinite matrix $A$ with $\sigma(A) \subset K$,
$$f((QA^pQ)^{1/p}) \leq 2h(\frac{1}{2}) Qf(A)Q;$$

\item[(iv)] for any natural number $k$, for any families of positive operators $\{A_i\}_{i=1}^k$ in a finite dimensional Hilbert space $H$ satisfying $\sum_{i=1}^k \alpha_i A_i = I_H$ (the identity operator in $H$) and for arbitrary numbers $x_i \in K$,
\begin{equation}\label{2}
f ([ \sum_{i =1}^k{\alpha_i x_i^p A_i}]^{1/p}) \leq \sum_{i =1}^k {h(\alpha_i) f(x_i)A_i}.
\end{equation}
\end{itemize}
\end{theorem}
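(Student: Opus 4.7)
My plan is to prove the four conditions equivalent by running the cycle (i) $\Rightarrow$ (ii) $\Rightarrow$ (iii) $\Rightarrow$ (i) via Theorem \ref{HP} and a $2 \times 2$ block unitary trick, and then closing a second loop (i) $\Rightarrow$ (iv) $\Rightarrow$ (i) in which spectral decompositions do the heavy lifting in the last arrow.

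For (i) $\Rightarrow$ (ii), I would apply Theorem \ref{HP} with the choices $C := V^*$, $D := (I_n - V^*V)^{1/2}$ (well-defined because $\|V\| \le 1$) and $B := O_n$. The identity $CC^* + DD^* = I_n$ holds by construction, and the hypothesis $f(0)=0$ makes the right-hand side of Theorem \ref{HP} collapse to $2h(\tfrac{1}{2}) V^*f(A)V$. The step (ii) $\Rightarrow$ (iii) is immediate, since every orthogonal projection $Q$ is a self-adjoint contraction, so $V := Q$ specializes (ii) to (iii). For (iii) $\Rightarrow$ (i), I set $\tilde{A} := \diag(A,B)$ and introduce
\[
U := \begin{pmatrix} \sqrt{\alpha}\, I_n & -\sqrt{1-\alpha}\, I_n \\ \sqrt{1-\alpha}\, I_n & \sqrt{\alpha}\, I_n \end{pmatrix},
\qquad Q := \begin{pmatrix} I_n & O_n \\ O_n & O_n \end{pmatrix}.
\]
A direct computation shows that the $(1,1)$-block of $Q\,U^*\tilde{A}^p U\,Q$ is precisely $\alpha A^p + (1-\alpha)B^p$ (all other blocks vanishing), while the analogous block of $Q\,U^*f(\tilde{A})U\,Q$ equals $\alpha f(A) + (1-\alpha)f(B)$. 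Applying (iii) to $U^*\tilde{A}U$ with the projection $Q$, and using $f(0)=0$ to discard the lower-right block, gives
\[
f\bigl([\alpha A^p + (1-\alpha)B^p]^{1/p}\bigr) \le 2h(\tfrac{1}{2})\bigl(\alpha f(A) + (1-\alpha)f(B)\bigr),
\]
and the hypothesis $2h(\tfrac{1}{2}) \le \alpha^{-1}h(\alpha)$, applied to both $\alpha$ and $1-\alpha$, then upgrades this to the inequality in (i).

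For (i) $\Rightarrow$ (iv), I would piggyback on (ii) through a column isometry $V : H \to H \oplus \cdots \oplus H$ ($k$ copies) defined by $Vh := (\sqrt{\alpha_1 A_1}\,h,\ldots,\sqrt{\alpha_k A_k}\,h)$, so that $V^*V = \sum_i \alpha_i A_i = I_H$ and $V$ is a contraction. With the scalar block-diagonal $D := \diag(x_1 I_H,\ldots,x_k I_H)$ one computes $V^*D^p V = \sum_i \alpha_i x_i^p A_i$ and $V^*f(D)V = \sum_i \alpha_i f(x_i)A_i$; invoking (ii) and then $2h(\tfrac{1}{2})\alpha_i \le h(\alpha_i)$ yields (iv). Conversely, for (iv) $\Rightarrow$ (i), I use the spectral decompositions $A = \sum_{i=1}^m a_i P_i$ and $B = \sum_{j=1}^n b_j Q_j$ into rank-one eigenprojections (so $\sum_i P_i = \sum_j Q_j = I_n$) and concatenate them into a single family of length $m+n$: weight $\alpha$ is attached to each $P_i$ with scalar $a_i$, and weight $1-\alpha$ to each $Q_j$ with scalar $b_j$. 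The constraint $\sum \alpha_k A_k = \alpha I_n + (1-\alpha) I_n = I_n$ is satisfied, and (iv) collapses to
\[
f\bigl([\alpha A^p + (1-\alpha)B^p]^{1/p}\bigr) \le h(\alpha) \sum_{i=1}^m f(a_i)P_i + h(1-\alpha) \sum_{j=1}^n f(b_j)Q_j = h(\alpha)f(A) + h(1-\alpha)f(B),
\]
which is precisely (i).

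The main technical obstacle I anticipate is the careful bookkeeping of the hypothesis $2h(\tfrac{1}{2}) \le \alpha^{-1}h(\alpha)$: it is indispensable in the two ``$2h(\tfrac{1}{2})$-loss'' steps (iii) $\Rightarrow$ (i) and (ii) $\Rightarrow$ (iv), yet completely absent from (iv) $\Rightarrow$ (i), suggesting that (iv) is intrinsically the sharpest of the four equivalent conditions. A secondary check is that every operator to which $f$ is applied has spectrum inside $K$; the $2 \times 2$ block construction in particular relies on $0 \in K$, which is consistent with the standing assumption $f(0) = 0$.
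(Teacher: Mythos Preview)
Your proof is correct and the overall architecture---the cycle (i) $\Rightarrow$ (ii) $\Rightarrow$ (iii) $\Rightarrow$ (i) via the Hansen--Pedersen inequality and the $2\times 2$ unitary/projection trick, together with the side loop (i) $\Leftrightarrow$ (iv) via spectral decompositions---matches the paper's proof step for step in every arrow except one.

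The one genuine difference is in (i) $\Rightarrow$ (iv). The paper invokes Neumark's dilation theorem to produce mutually orthogonal projections $P_i$ in a larger Hilbert space $\mathcal{H}\supset H$ with $\alpha_i A_i = P P_i P|_H$, and then applies the projection inequality (iii) to the compression by $P$. Your route is more concrete: you write down the dilation explicitly as the column isometry $V:H\to H^{\oplus k}$ with blocks $\sqrt{\alpha_i A_i}$, and apply (ii) directly to $V$ and the scalar block-diagonal $D=\diag(x_1 I_H,\dots,x_k I_H)$. These are the same idea---your $V$ \emph{is} the standard Naimark dilation of the resolution $\{\alpha_i A_i\}$---but your version is self-contained and avoids the external citation, at the modest cost of tacitly using (ii) for a rectangular contraction (which is harmless, since Theorem \ref{HP} already allows rectangular $C,D$, or one may pad $V$ to a square contraction on $H^{\oplus k}$). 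The paper's version, by contrast, keeps everything inside the projection formulation (iii) and makes the dependence on a dilation theorem explicit. Your closing remark that (iv) is the ``sharpest'' of the four conditions, in that it alone recovers (i) without invoking the hypothesis $2h(\tfrac12)\le\alpha^{-1}h(\alpha)$, is a nice observation not made in the paper.
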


\begin{proof}
The implication (ii) $\Rightarrow$ (iii) is obvious. 

Let us prove the implication (i) $\Rightarrow$ (ii). Suppose that $f \in opgx(p, h, J)$. Then by Theorem \ref{HP} we have 
$$
f([CA^pC^* + DB^pD^*]^{1/p}) \leq 2h(\frac{1}{2}) ( Cf(A)C^* + Df(B)D^*),
$$
where $ CC^* + DD^* =I_n.$ Since $||V|| \leq 1$, we can choose $W $ such that $VV^* +WW^* = I_n$. Choosing $B = O_n$, we have that $f(B) =f(O_n)=  f(0) I_n= O_n$. Hence,  
\begin{equation*}
\begin{split}
f ( (V^*A^pV)^{1/p})& = f ( (V^*A^pV + W^*B^pW)^{1/p})  \\
& \leq 2h(\frac{1}{2}) ( V^*f(A)V + W^*f(B)W)  \\
& \leq 2h(\frac{1}{2}) ( V^*f(A)V).     \\
\end{split}
\end{equation*}

(iii) $\Rightarrow$ (i). Let $A$ and $B$ be  positive semidefinite matrices with spectra in $K$ and $0< \lambda <1$.
Define
$$
C : = \begin{bmatrix}
 A & O_n\\
  O_n & B\\ 
  \end{bmatrix}, ~~ U := \begin{bmatrix}
 \sqrt{\lambda}I_n & -\sqrt{1- \lambda}I_n\\
  \sqrt{1- \lambda}I_n &  \sqrt{\lambda}I_n\\ 
  \end{bmatrix},  ~~ Q :=  \begin{bmatrix}
 I_n & O_n\\
  O_n & O_n\\ 
  \end{bmatrix}.
$$ 
Then $C = C^*$ with $\sigma(C) \subset K$, $U$ is an unitary and $Q$ is an orthogonal projection and 
\begin{equation*}
U^*C^pU =  \begin{bmatrix}
 \lambda A^p + (1- \lambda)B^p & - \sqrt{\lambda - \lambda^2}A^p +\sqrt{\lambda - \lambda^2}B^p\\
   - \sqrt{\lambda - \lambda^2}A^p +\sqrt{\lambda - \lambda^2}B^p &  (1- \lambda)A^p +\lambda  B^p\\ 
  \end{bmatrix}
\end{equation*}
is a self-adjoint matrix. Since
$$
QU^*C^pUQ =  \begin{bmatrix}
 \lambda A^p + (1- \lambda)B^p & O_n\\
  O_n & O_n\\ 
  \end{bmatrix}
$$ 
and $||UQ|| \leq 1$, hence
\begin{equation*}
\begin{split}
 \begin{bmatrix}
 f ( (\lambda A^p + (1- \lambda)B^p)^{1/p}) & O_n\\
  O_n & O_n\\ 
  \end{bmatrix}& = f \big( (QU^*C^pUQ)^{1/p} \big) \\
  & \leq 2h(\frac{1}{2}) QU^*f(C)UQ  \\ 
  & = 2h(\frac{1}{2})\begin{bmatrix}
 \lambda f(A) + (1- \lambda)f(B) & O_n\\
  O_n & O_n\\ 
  \end{bmatrix}.
  \end{split}
\end{equation*}
According to the property of $h$, from the last inequality it implies
\begin{equation*} \label{DK}
\begin{split}
 f (( \lambda A^p + (1- \lambda)B^p)^{1/p}) & \leq 2h(\dfrac{1}{2}) (\lambda f(A) + (1- \lambda)f(B))\\
 & \le h(\lambda) f(A) + h(1-\lambda)f(B).
 \end{split}
\end{equation*}

(iv) $\Rightarrow$ (i). Let $X, Y$ be two  positive operators on $H$ with spectra in $K$, and $\alpha \in (0,1)$. Let  
$$X= \sum_{i=1}^n{\lambda_iP_i}, \quad Y =\sum_{j=1}^n{\mu_jQ_j}$$ 
be the spectral decompositions of $X$ and $Y$. Then we have
$$
\alpha \sum_{i=1}^n{P_i} + (1- \alpha)\sum_{j=1}^n{Q_j}= I_H.
$$
On account of (\ref{2}), we have
\begin{equation*}
\begin{split}
f ([\alpha X^p + (1- \alpha) Y^p]^{1/p}) &= f ([ \sum_{i=1}^n {\alpha \lambda_i^p P_i} + \sum_{j=1}^n {(1 - \alpha) \mu_i^p  Q_j}]^{1/p}) \\
& \leq  \sum_{i=1}^n {h( \alpha) f( \lambda_i) P_i} + \sum_{j=1}^n {h(1- \alpha) f( \mu_j) Q_j } \\
& = h(\alpha) \sum_{i=1}^n f (\lambda_i) P_i + h(1- \alpha) \sum_{j=1}^n {f(\mu_j) Q_j}\\
& = h (\alpha)f(X) + h(1- \alpha) f(Y).
\end{split}
\end{equation*}

(i) $\Rightarrow$ (iv). By the Neumark theorem \cite{Neumark}, there exists a Hilbert space $\mathcal{H}$ larger than $H$ and a family of mutually orthogonal projections $P_i$ in $\mathcal{H}$ such that $\sum_{i=1}^k {P_i} = I_\mathcal{H}$ and $\alpha_i A_i = P P_i P|_H (i =1, 2,..., k)$, where $P$ is the projection from $\mathcal{H}$ onto $H$. Then we have
\begin{equation*}
\begin{split}
f ([ \sum_{i=1}^k{\alpha_i x_i^p A_i }]^{1/p}) & = f([ \sum_{i=1}^k{ x_i^p P P_i P |_ H}]^{1/p}) \\
& =  f([P (\sum_{i=1}^k{ x_i^p  P_i })P |_ H]^{1/p}) \\
& \leq  2 h( \frac{1}{2}) P  f ( [\sum_{i=1}^k{ x_i^p  P_i } ]^{1/p})  P |_ H  \\
& = 2 h( \frac{1}{2}) P( \sum_{i=1}^k { f(x_i) P_i } ) P |_ H \\
& = 2 h( \frac{1}{2})  \sum_{i=1}^k { f(x_i) P P_i P |_ H} \\
& = 2 h( \frac{1}{2})  \sum_{i=1}^k {\alpha_i f(x_i) A_i} \\
& \leq  \sum_{i=1}^k {h( \alpha_i) f(x_i) A_i}  
\end{split}
\end{equation*}
\end{proof}

As a consequence of the above theorem we obtain the Choi-Davis-Jensen type inequality for operator $(p, h)$-convex functions.
\begin{corollary}\label{cor}
Let $\Phi$ be a unital positive linear map on $B(H)$, $A$ a positive operator in $H$ and $f$ an operator $(p, h)$-convex function on $\mathbb{R}^+$ such that $f(0) = 0$. Let $h$ be a non-negative and non-zero super-multiplicative function on $J$ satisfying $2h(1/2) \le \alpha^{-1} h(\alpha)\ (\alpha \in (0,1))$. Then
$$
f((\Phi(A^p))^{1/p}) \le 2h(1/2) \Phi(f(A)).
$$
\end{corollary}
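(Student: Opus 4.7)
The plan is to deduce the corollary directly from the equivalence (i) $\Leftrightarrow$ (ii) of Theorem \ref{apl}, by realizing the unital positive linear map $\Phi$ as a compression of a $*$-homomorphism via a Stinespring-type dilation.

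First I would apply the Stinespring dilation theorem to $\Phi$: this produces a Hilbert space $\mathcal{K}$, an isometry $V: H \to \mathcal{K}$ (so $V^*V = I_H$, and in particular $\|V\| \le 1$), and a unital $*$-homomorphism $\pi: B(H) \to B(\mathcal{K})$ such that
$$\Phi(X) = V^* \pi(X) V \qquad \text{for every } X \in B(H).$$

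Next I would set $B := \pi(A)$. Because $\pi$ is a unital $*$-homomorphism, $B$ is a positive operator on $\mathcal{K}$ with $\sigma(B) \subset \sigma(A) \subset \mathbb{R}^+$, and $\pi$ intertwines the continuous functional calculus, so $\pi(A^p) = B^p$ and $\pi(f(A)) = f(B)$. Sandwiching by $V^*(\cdot)V$ then gives the identifications
$$\Phi(A^p) = V^* B^p V, \qquad \Phi(f(A)) = V^* f(B) V.$$

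Finally, since the standing hypotheses $f(0)=0$ and $2h(1/2) \le \alpha^{-1}h(\alpha)$ are in force, I would invoke Theorem \ref{apl}(ii) with the contraction $V$ and the positive operator $B$ to obtain
$$f\bigl((V^* B^p V)^{1/p}\bigr) \le 2h(\tfrac{1}{2})\, V^* f(B) V,$$
and substituting the identifications above yields exactly $f((\Phi(A^p))^{1/p}) \le 2h(1/2)\,\Phi(f(A))$.

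The argument is essentially routine once one spots that condition (ii) of Theorem \ref{apl} is a ``Jensen inequality under compression by a contraction'', which is precisely the shape produced by Stinespring's representation of $\Phi$. The only conceptual hurdle is recognizing this match; no further operator inequality is required beyond what Theorem \ref{apl} already supplies.
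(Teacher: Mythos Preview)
Your overall strategy---Stinespring dilation followed by Theorem \ref{apl}(ii)---is exactly the route the paper takes, but there is a genuine gap in the first step. Stinespring's theorem applies to \emph{completely} positive maps, whereas the hypothesis on $\Phi$ is only that it is a unital \emph{positive} linear map on $B(H)$. A positive map on a noncommutative $C^*$-algebra such as $B(H)$ need not be completely positive, so you cannot invoke Stinespring for $\Phi$ directly.

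The paper closes this gap by first restricting $\Phi$ to the (commutative) $C^*$-subalgebra $\mathcal{C}^*(A, I_H)$ generated by $A$ and $I_H$. Because any positive linear map on a commutative $C^*$-algebra is automatically completely positive, the restriction $\Psi := \Phi|_{\mathcal{C}^*(A,I_H)}$ \emph{is} completely positive, and Stinespring then legitimately produces an isometry $V$ and a $*$-homomorphism $\pi$ with $\Psi(X) = V^*\pi(X)V$. From there your computation goes through verbatim: $\pi$ intertwines the functional calculus, so $\Psi(A^p)=V^*\pi(A)^pV$ and $\Psi(f(A))=V^*f(\pi(A))V$, and Theorem \ref{apl}(ii) finishes the job. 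Inserting this one-line restriction step repairs your argument completely.
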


\begin{proof}
Let $A$  be an arbitrary positive operator in $H$. We put $\Psi$ the restriction of $\Phi$ to the $C^*$-algebra $\mathcal{C}^*(A, I_H)$ generated by $I_H$ and $A$. Then $\Psi$ is a unital completely positive map on $\mathcal{C}^*(A, I_H)$. By the Stinespring dilation theorem, there exists an isometry $V: H \mapsto H$ and a unital $*$-homomorphism $\pi: \mathcal{C}^*(A, I_H) \mapsto B(H)$ such that $\Psi(A) = V^*\pi(A) V$. Hence,
\begin{equation*}
\begin{split}
f((\Phi(A^p))^{1/p}) & = f((\Psi(A^p))^{1/p}) = f((V^*\pi(A)^p V)^{1/p}) \le 2h(1/2) V^*f(\pi(A)) V \\
& = 2h(1/2) V^*\pi(f(A)) V = 2h(1/2) \Psi(f(A)) = 2h(1/2) \Phi(f(A)).
\end{split}
\end{equation*}
\end{proof}

\begin{remark} Here we give an example for the function $h$ which is different from the identity function and satisfies conditions in Theorem \ref{apl} and Corollary \ref{cor}. It is easy to check that for the function $h(x) = x^3 -x^2 +x$ and for any $x, y \in [0, 1]$,
$$
h(xy) -h(x)h(y) = xy(x+y)(1-x)(1-y) \ge 0.
$$
Therefore, $h$ is super-multiplicative on $[0, 1]$. At the same time,  the function $h(x)/x = x^2 - x +1$ attains minimum at $x =1/2$, and hence $2h(1/2) \le  h(x) /x$ for any $x \in (0, 1)$. 
\end{remark}

\begin{corollary}\label{10}
Let $f$ be operator $(1,h)$-convex function on $\mathbb{R}^+$ such that $f(0) = 0$. Then for any positive definite matrices $A \le B$, 
$$
A^{-1} f(A) \le 2h(1/2)B^{-1} f(B).
$$

In the case when $2h(1/2) \le 1$ the function $t^{-1} f(t)$ is operator monotone on $(0, \infty)$, and hence the function $f(t)$ is operator convex.
\end{corollary}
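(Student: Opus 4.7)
The plan is to deduce both conclusions from the characterization (i)~$\Leftrightarrow$~(ii) in Theorem \ref{apl}, by conjugating $B$ into $A$ by a well-chosen contraction, and then, for the operator-convexity statement, to invoke the classical Hansen--Pedersen characterization of operator convex functions vanishing at the origin.

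For the first inequality I would specialize Theorem \ref{apl}(ii) to $p=1$: for any contraction $V$ and any self-adjoint $X$ with spectrum in $\mathbb{R}^+$,
\[
 f(V^*XV)\ \le\ 2h(1/2)\, V^*f(X)V.
\]
Given positive definite matrices $A\le B$, the natural contraction is $V:=B^{-1/2}A^{1/2}$. Indeed $VV^*=B^{-1/2}AB^{-1/2}\le I$ because $A\le B$, so $\|V\|\le 1$; and a direct computation gives $V^*BV=A^{1/2}B^{-1/2}\,B\,B^{-1/2}A^{1/2}=A$. Applying the displayed inequality with $X=B$ therefore yields
\[
 f(A)\ \le\ 2h(1/2)\, A^{1/2}B^{-1/2}\,f(B)\,B^{-1/2}A^{1/2}.
\]
Next I would conjugate both sides by $A^{-1/2}$, which preserves the order. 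Since $A^{-1/2}$ commutes with $f(A)$ by the functional calculus, the left side collapses to $A^{-1/2}f(A)A^{-1/2}=A^{-1}f(A)$, while on the right $A^{-1/2}\cdot A^{1/2}B^{-1/2}=B^{-1/2}$ (and symmetrically on the other side), producing $2h(1/2)\,B^{-1/2}f(B)B^{-1/2}=2h(1/2)\,B^{-1}f(B)$. This is exactly the desired inequality.

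For the second assertion, the hypothesis $2h(1/2)\le 1$ reduces the previous bound to
\[
 A^{-1}f(A)\ \le\ B^{-1}f(B) \qquad \text{whenever } 0<A\le B,
\]
which is precisely the statement that the function $g(t):=t^{-1}f(t)$ is operator monotone on $(0,\infty)$. To deduce operator convexity of $f$ itself I would invoke the well-known Hansen--Pedersen characterization: a continuous non-negative function $f$ on $[0,\infty)$ with $f(0)=0$ is operator convex if and only if $t\mapsto f(t)/t$ is operator monotone on $(0,\infty)$.

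I expect the main obstacle to be identifying the correct contraction $V=B^{-1/2}A^{1/2}$; once this is chosen, the remainder is routine functional-calculus algebra together with the observation that conjugation by $A^{-1/2}$ simultaneously produces $A^{-1}$ on the left and $B^{-1}$ on the right. The operator-convexity conclusion is then immediate from Hansen--Pedersen, so no independent argument is needed for that step.
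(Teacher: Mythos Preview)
Your argument is correct and essentially identical to the paper's: the paper sets $C=A^{1/2}B^{-1/2}$ (your $V^*$), writes $A=CBC^*$, applies the contraction inequality $f(CBC^*)\le 2h(1/2)\,Cf(B)C^*$ from Theorem~\ref{apl}, and then conjugates by $A^{-1/2}=B^{-1/2}C^{-1}$ to reach the conclusion; the final appeal to Hansen--Pedersen for operator convexity is the same.
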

\begin{proof}
Since $0<A \le B,$ we can find $C$ such that $A^{1/2} = CB^{1/2}$, and hence $A= CBC^*$. Then
\begin{align*}
A^{-1} f(A) &= B^{-1/2}C^{-1}f(C BC^*)(C^*)^{-1}B^{-1/2} \\
& \le 2h(1/2) B^{-1/2}C^{-1}Cf(B) C^*(C^*)^{-1}B^{-1/2} \\
&= 2h(1/2) B^{-1} f(B).
\end{align*}

In the case when $2h(1/2) \le 1$, from the above inequality we get
$$
A^{-1} f(A)  \le B^{-1} f(B), 
$$
that means, the function $t^{-1}f(t)$ is operator monotone, and as a consequence of that  the function $f(t)$ is operator convex by \cite{Hansen-Pedersen}.
\end{proof}

\begin{remark}
It is easy to check that the function $h(x) = (x^3-x^2+x)/2$ is super-multiplicative and satisfies the conditions in Theorem \ref{apl} and Corollary \ref{10}.
\end{remark}

\subsection{An open question}

As we know that the value $f(p) = (\frac{a^p+b^p}{2})^{1/p}$ is called the {\it binormal mean}, or the {\it power mean}, and is an increasing function of $p$ on $(-\infty, \infty)$. And it is well-known that for two positive number $a, b$,
$$
\sqrt{ab} = e^{1/2(\log a + \log b)} = \lim_{p \mapsto 0} \left(\frac{a^p+b^p}{2}\right)^{1/p}.
$$

In the other hand, Bhagwat and Subramanian \cite{BS} showed that for positive definite matrices $A, B$,
$$
\lim_{p\mapsto 0} F(p) =  e^{\frac{1}{2}(\log A+ \log B)} 
$$
and $e^{\frac{1}{2}(\log A+\log B)}$ is different to the geometric mean of $A, B$. 

Now, suppose that $h(\alpha) = \alpha$ and the function $f$ is operator $(p,h)$-convex for any $p>0$ and the function $f$ is continuous on $\mathbb{R}^+$. So, we have
$$
f(e^{\frac{1}{2}(\log A+\log B)}) \le \frac{f(A)+ f(B)}{2}.
$$

{\it Question: what is the class of functions satisfying the last inequality for any positive definite matrices $A, B$?}

\bigskip

{\it Acknowledgement.} The authors would like to express sincere thanks to the anonymous referee for his comments which improve this paper.

\end{document}